\def\@setcopyright{}
\newtheorem{lemma}{Lemma}[section]
\newtheorem{proposition}[lemma]{Proposition}
\newtheorem{theorem}[lemma]{Theorem}
\newtheorem{remark}[lemma]{Remark}
\newtheorem{conjecture}[lemma]{Conjecture}
\newtheorem*{acknowledgments*}{ACKNOWLEDGMENTS}
\newtheorem{definition}[lemma]{Definition}
\newtheorem*{conj*}{Conjecture}
\newtheorem*{thm*}{Main Theorem}
\newtheorem*{remark*}{Remark}
\newtheorem*{lemma*}{Lemma}
\newtheorem*{example*}{[Example]}
\begin{document}
\begin{center}
{\Large \bf Bouc's conjecture on $B$-groups
}\\

\bigskip
\end{center}

\begin{center}
Xingzhong Xu$^{1, 2}$, Jiping Zhang$^{3}$

\end{center}

\footnotetext {$^{*}$~~Date: 14/10/2016.
}
\footnotetext {
1. Department of Mathematics, Hubei University, Wuhan, 430062, China

2. Departament de Matem$\mathrm{\grave{a}}$tiques, Universitat Aut$\mathrm{\grave{o}}$noma de Barcelona, E-08193 Bellaterra,
Spain

~$~$E-mail address: xuxingzhong407@mat.uab.cat; xuxingzhong407@126.com

3. School of Mathematical Sciences, Peking University, Beijing, 100871, China

~$~$E-mail address: jzhang@pku.edu.cn

Supported by National 973 Project (2011CB808003) and NSFC grant (11371124, 11501183).
}

\begin{center}
\footnotesize{\emph{
}}\\
\end{center}

\title{}
\def\abstractname{\textbf{Abstract}}

\begin{abstract}\addcontentsline{toc}{section}{\bf{English Abstract}}
Bouc proposed the following conjecture: a finite group $G$ is nilpotent if
and only if its largest quotient $B$-group $\beta(G)$ is nilpotent. And he has prove that this conjecture holds when
$G$ is solvable. In this paper, we consider the case when $G$ is not solvable. Let
$S$ be a nonabelian simple group except the Chevalley groups
$A_{n}(q)$, $D_{n}(q)$, $E_{6}(q)$, and
$^2A_{n}(q)$, if there exists only one factor of $G$ which
is isomorphic to $S$,
then $\beta(G)$ is not solvable, of course, is not nilpotent. That means we prove the conjecture in these cases.
\hfil\break

\textbf{Key Words:} $B$-group;  Burnside ring.
\hfil\break \textbf{2000 Mathematics Subject
Classification:} \ 18B99 $\cdot$ \ 19A22 $\cdot$ \ 20J15

\end{abstract}

\maketitle

\section{\bf Introduction and main results}

We all know that the Burnside ring(algebra) is an elementary structure in mathematics, because it
is formed only by the elementary operations on sets: product and coproduct(i.e., disjoint union).
Let $G$ be a finite group, we can do these operations on $G$-sets and get a ring $B(G)$ which is named
as the Burnside ring. And the most important elements of this ring are units and idempotents.
Here, we only focus on the idempotents, the forms of these idempotents can be given by \cite{Gl, Y}.

Because we have some operations from finite groups, thus we can do these operations on Burnside rings.
And these operations are called as
"Induction functor, Restriction functor, Inflation functor, Deflation functor and others functors."
There exists an interesting phenomenon: the Deflation functor can send some idempotent to $0$.
A special class of finite groups which have this interesting phenomenon is defined
as $B$-groups \cite{Bo1,Bo2}.
Indeed, these groups plays an important role in study of Burnside functors.

In \cite{Bo1,Bo2}, Bouc told us that there exists a largest quotient of a finite group which is a $B$-group, of
course, well-defined up to isomorphism, and denoted by $\beta(G)$.
We only know a few properties of $B$-groups, until \cite{Ba} proved some relations between $\beta(G)$ and
$G$. Moreover, \cite{Bo1} proposed the following conjecture:

\begin{conjecture}\cite[Conjecture A]{Bo1} Let $G$ be a finite group. Then $\beta(G)$ is nilpotent if
and only if $G$ is nilpotent.
\end{conjecture}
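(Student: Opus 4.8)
The plan is to separate the two implications and to concentrate all the difficulty in the non-solvable direction.

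The ``only if'' implication is immediate. Since $\beta(G)$ is, by its very construction, a quotient of $G$, and since every quotient of a nilpotent group is again nilpotent, $G$ nilpotent forces $\beta(G)$ nilpotent. Hence the whole content of the conjecture lies in the converse, which I would prove in contrapositive form: \emph{if $G$ is not nilpotent then $\beta(G)$ is not nilpotent}. Bouc has already established this when $G$ is solvable, so from now on I assume $G$ is not solvable and try to exhibit a non-abelian composition factor inside $\beta(G)$.

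The first genuine step is to make the passage from $G$ to $\beta(G)$ computable. For $N \trianglelefteq G$ one has Bouc's coefficient
\[
m_{G,N} = \frac{1}{|G|}\sum_{\substack{X \le G \\ XN = G}} |X|\,\mu(X,G),
\]
where $\mu$ denotes the M\"obius function of the subgroup lattice of $G$; and $\beta(G) = G/N_{0}$ is obtained by successively deflating away those normal subgroups $N$ on which $m_{G,N}$ vanishes, a group being a $B$-group precisely when no nontrivial such $N$ exists. In this language ``$\beta(G)$ is not nilpotent'' becomes ``$N_{0}$ does not absorb the non-solvable part of $G$'', and it suffices to produce a single non-abelian chief factor $H/K \cong S^{n}$ of $G$, with $S$ non-abelian simple, whose image is not killed by the deflation process. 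The second step reduces the survival of such a factor to a statement about $S$ alone: taking $N = S$ in the displayed formula, $S$ is a $B$-group exactly when
\[
m_{S} := \frac{1}{|S|}\sum_{X \le S} |X|\,\mu(X,S) \ne 0,
\]
in which case $\beta(S) = S$ is non-nilpotent, and one then propagates this non-vanishing upward through a chief series of $G$ using the compatibility of deflation with the layers of the series. The case $n = 1$ feeds directly into this scheme, while for $n > 1$ the diagonal subgroups of $S^{n}$ contribute to the M\"obius sum and have to be analysed on their own.

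The main obstacle, and the reason the conjecture is hard rather than routine, is the \emph{uniform} verification, resting on the classification of finite simple groups, that $m_{S} \ne 0$ for \emph{every} non-abelian finite simple group $S$. For the alternating and sporadic groups and for the small Lie-type families this is a finite (if laborious) computation, but for the large Chevalley families $A_{n}(q)$, $D_{n}(q)$, $E_{6}(q)$ and $^{2}A_{n}(q)$ the maximal-subgroup structure, and with it the alternating sum defining $m_{S}$, is genuinely delicate; controlling its non-vanishing there is exactly the point at which a complete proof stalls, and it is precisely these families that the theorem proved below must exclude. A second, independent obstacle is the multiplicity $n > 1$: the diagonal subgroups of $S^{n}$ make $m_{G,N}$ much harder to evaluate, so that a multiplicity-free analysis of $\beta$ along a chief series, valid regardless of how often a given $S$ occurs, would still be required to settle the conjecture in full generality.
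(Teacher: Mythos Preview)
The statement you are attempting is a \emph{conjecture} that the paper does not prove in full; the paper establishes only the partial case recorded as the Main Theorem, under the additional hypotheses that the non-abelian simple group $S$ occurs \emph{exactly once} as a composition factor of $G$ and that $\Out(S)$ is $p$-hypo-elementary for some prime $p$. So there is no ``paper's own proof'' of the full conjecture to compare against, only the proof of Theorem~4.1.

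Beyond that, your reduction contains a genuine error. You assert that a non-abelian simple group $S$ is a $B$-group exactly when $m_{S} := m_{S,S} \ne 0$. The opposite holds: $S$ is a $B$-group when $m_{S,N} = 0$ for every nontrivial $N \trianglelefteq S$, and for simple $S$ the only such $N$ is $S$ itself; moreover $m_{G,G} = 0$ iff $G$ is non-cyclic (Proposition~2.8). Thus $m_{S,S} = 0$ for \emph{every} non-abelian simple $S$, which is exactly why all such $S$ are $B$-groups (Remark~2.9). The ``uniform verification that $m_{S} \ne 0$'' you describe as the main obstacle is therefore vacuously false, and the program built on it collapses. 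The step ``propagate this non-vanishing upward through a chief series'' is likewise unworkable as stated: a composition factor of $G$ need not be a \emph{quotient} of $G$, so knowing $\beta(S) = S$ gives no direct handle on $\beta(G)$.

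The paper's argument is structurally different and computes no M\"obius sums for simple groups. One takes a minimal counterexample $G$ with $\beta(G)$ solvable, locates a minimal normal subgroup $N$ inside the maximal deflation kernel $M$, and shows $N \cong S$. If $C_{G}(N) \ne 1$, then $G/C_{G}(N)$ is smaller but still has $S$ as a factor and has solvable $\beta$ (being a quotient of $\beta(G)$), contradicting minimality. If $C_{G}(N) = 1$, then $G$ embeds in $\Aut(S)$, so $G/N$ embeds in $\Out(S)$ and inherits the $p$-hypo-elementary property; since $m_{G,N} \ne 0$ one has $\beta(G) \cong \beta(G/N)$, whence $\beta(G)$ is $p$-hypo-elementary, and Baumann's theorem forces $G$ itself to be $p$-hypo-elementary, hence solvable --- a contradiction. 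The families $A_{n}(q)$, $D_{n}(q)$, $E_{6}(q)$, ${}^{2}A_{n}(q)$ are excluded not because some M\"obius sum is intractable, but because for them $\Out(S)$ fails to be $p$-hypo-elementary for any prime $p$, so the second horn of the dichotomy breaks down.
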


And \cite{Bo1} gave a proof of this conjecture when $G$ is solvable. Moreover, in \cite{Bo1}, Bouc said that
Th$\mathrm{\acute{e}}$venaz proposed the another conjecture:

\begin{conjecture}\cite[Conjecture]{Bo1} Let $G$ be a finite group. Then $\beta(G)$ is solvable if
and only if $G$ is solvable.
\end{conjecture}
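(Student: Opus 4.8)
The plan is to separate the two implications, since one is immediate and the other carries all the weight. By construction $\beta(G)$ is a quotient of $G$, and a quotient of a solvable group is solvable; hence $G$ solvable forces $\beta(G)$ solvable. The entire content therefore lies in the converse, which I would prove in contrapositive form: if $G$ is not solvable, then $\beta(G)$ is not solvable.

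The first ingredient is a monotonicity property coming directly from the description of $\beta(G)$ as the \emph{largest} quotient which is a $B$-group. If $G\twoheadrightarrow Q$, then $\beta(Q)$ is a $B$-group quotient of $Q$, hence a $B$-group quotient of $G$, so that $\beta(G)\twoheadrightarrow\beta(Q)$. It is therefore enough to exhibit a single non-solvable $B$-group quotient $Q$ of $G$: then $\beta(G)$ surjects onto $Q$ and cannot be solvable. The base case is $Q=S$ with $S$ non-abelian simple. Its only non-trivial normal subgroup is $S$ itself, and the deflation formula $\mathrm{Def}^{S}_{1}\,e^{S}_1=m_{S,S}\,e^{1}_1$ gives
\[
m_{S,S}=\frac{1}{|S|}\sum_{X\le S}|X|\,\mu(X,S),
\]
which by M\"obius inversion is exactly the proportion of elements $s\in S$ with $\langle s\rangle=S$. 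A non-abelian simple group is non-cyclic, so this proportion is $0$; hence $m_{S,S}=0$, every non-abelian simple group is a $B$-group, and $\beta(S)=S$. In particular the assertion is immediate whenever $G$ has a non-abelian simple quotient.

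To force such a quotient in general I would use monotonicity to descend to a convenient one. Factoring out the solvable radical $R$ makes the radical trivial, so that every minimal normal subgroup of $G/R$ is non-abelian; factoring out in addition the centraliser of one such component reduces the problem to a group $\bar G$ with a \emph{unique} minimal normal subgroup $N\cong S^{k}$, where $S$ is non-abelian simple, $C_{\bar G}(N)=1$, and $\bar G$ is embedded in $\Aut(S)\wr\Sigma_{k}$ with $N=\soc(\bar G)$. Since every non-trivial normal subgroup of $\bar G$ then contains $N$, the multiplicativity $m_{\bar G,M}=m_{\bar G,N}\,m_{\bar G/N,\,M/N}$ of the deflation coefficients along a chain shows that $\bar G$ is a $B$-group, whence $\beta(\bar G)=\bar G$ is non-solvable and $\beta(G)\twoheadrightarrow\bar G$, precisely when the single coefficient
\[
m_{\bar G,N}=\frac{1}{|\bar G|}\sum_{XN=\bar G}|X|\,\mu(X,\bar G)
\]
vanishes, the sum running over the supplements $X$ of $N$ in $\bar G$. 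In this way the full conjecture is reduced to the vanishing of one M\"obius sum for every non-abelian simple $S$ and every multiplicity $k$.

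The step I expect to be the genuine obstacle is precisely the evaluation of this sum, and it is where the classification of finite simple groups must enter. One has to describe the supplements of $N=S^{k}$ and the values $\mu(X,\bar G)$ through the maximal subgroups of $S$, the outer automorphism group $\Out(S)$, and the action of $\Sigma_{k}$. For a single factor $k=1$ and a simple group $S$ carrying no graph automorphism the cancellation producing $m_{\bar G,N}=0$ can be organised family by family; the reason the partial results here set aside the families $A_{n}(q)$, ${}^2A_{n}(q)$, $D_{n}(q)$, $E_{6}(q)$ together with the multiplicities $k>1$ is that the extra graph automorphisms in $\Out(S)$ and the wreath structure $\Aut(S)\wr\Sigma_{k}$ create new supplements and new fusion among maximal subgroups, destroying that cancellation. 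Proving the vanishing of $m_{\bar G,N}$ uniformly over all simple families and all $k$ is the crux on which the full conjecture turns.
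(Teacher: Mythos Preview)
The statement you are attempting is a \emph{conjecture}: the paper does not prove it in full, only the partial Main Theorem (Theorem~4.1). Your proposal is not a proof either --- it is a reduction. You correctly reduce the contrapositive to an almost-simple (or wreath-type) group $\bar G$ with unique minimal normal subgroup $N\cong S^{k}$ and $C_{\bar G}(N)=1$, and observe that everything hinges on the vanishing of the single coefficient $m_{\bar G,N}$. But you then leave that vanishing as an open problem, so nothing has been established beyond what was already known.

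More importantly, your strategy diverges from what the paper actually does for its partial result, and you mis-diagnose why the paper excludes certain families. The paper never attempts to evaluate $m_{\bar G,N}$ directly. Its argument runs a minimal counterexample: one takes $M\trianglelefteq G$ maximal with $m_{G,M}\neq0$, picks a minimal normal $N\le M$, and gets $m_{G,N}\neq0$, hence $\beta(G)\cong\beta(G/N)$ by Theorem~2.6. The ``only one factor isomorphic to $S$'' hypothesis forces $N\cong S$. After disposing of the case $C_G(N)\neq1$ by minimality, one has $G\hookrightarrow\Aut(S)$ and $G/N\hookrightarrow\Out(S)$. The decisive step is then Baumann's theorem (Theorem~3.1): if $\Out(S)$ is $p$-hypo-elementary (cyclic modulo $p$), so is $G/N$, hence so is $\beta(G/N)=\beta(G)$, and Baumann forces $G$ itself to be $p$-hypo-elementary, in particular solvable --- a contradiction.

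Thus the families $A_n(q)$ ($n\ge2$), ${}^2A_n(q)$, $D_n(q)$ ($n=4$ or $p=2$), $E_6(q)$ are excluded \emph{not} because graph automorphisms spoil a M\"obius-sum cancellation in $m_{\bar G,N}$, as you suggest, but simply because for these $S$ the group $\Out(S)$ fails to be cyclic modulo any prime, so Baumann's theorem cannot be invoked. Your supplement/fusion picture is a plausible heuristic for a direct attack on $m_{\bar G,N}$, but it is not the mechanism operating in the paper.
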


After recalling the basic definitions and properties of $B$-groups, we will give
a proof of the Conjecture 1.1-2 when the group have only one factor which
is isomorphic to some finite nonabelian simple group as following.

\begin{thm*} Let $G$ be a finite group. Let $S$ be a nonabelian simple group except the following groups

(1) $A_{n}(q)$ for $q=p^{f}$ when $n+1\gneq 2$;

(2) $D_{n}(q)$ for $q=p^{f}$ when $n=4$ or $p=2$;

(3) $E_{6}(q)$ for $q=p^f$;

(4) $^2A_{n}(q)$ for $n\gneq 1$.

If there exists only one factor of $G$ which
is isomorphic to $S$,
then $\beta(G)$ is not solvable.
\end{thm*}
Here, the group $G$ can have other nonabelian simple groups as its factors.

\section{\bf Burnside rings and $B$-groups}

In this section we collect some known results that will be needed later.  For the background theory of Burnside rings and
$B$-groups, we refer to \cite{Bo1}, \cite{Bo2},\cite{Dr}

\begin{definition}\cite[Notation 5.2.2]{Bo2} Let G be a finite group and $N\unlhd G$. Denote by $m_{G,N}$ the rational number defined by:
$$m_{G,N}=\frac{1}{|G|}\sum_{XN=G} |X|\mu(X, G),$$
where $\mu$ is the $\mathrm{M\ddot{o}bius}$ function of the poset of subgroups of $G$.
\end{definition}

The reason to definite $m_{G,N}$ is
$$\mathrm{Def}^G_{G/N}e_G^G=m_{G,N}e_{G/N}^{G/N}$$
by \cite{Bo1, Bo2}. Hence, if $m_{G,N}=0$, then $\mathrm{Def}^G_{G/N}e_G^G=0$.
Here, $e_G^G, e_{G/N}^{G/N}$ are idempotents of the Burnside rings of $G$ and $G/N$ respectively.
For the idempotents of  the Burnside rings, we refer to \cite{Gl, Y}.

\begin{remark} If $N=1$, we have
$$m_{G,1}=\frac{1}{|G|}\sum_{X1=G} |X|\mu(X, G)=\frac{1}{|G|}|G|\mu(G, G)=1\neq 0.$$
\end{remark}

\begin{definition}\cite[Definition 2.2]{Bo1} The finite group $G$ is called a $B$-group if
$m_{G,N}=0$ for any non-trivial normal subgroup $N$ of $G$.
\end{definition}

\begin{proposition}\cite[Proposition 5.4.10]{Bo2} Let $G$ be a finite group. If $N_{1}, N_{2}\unlhd G$
are maximal such that $m_{G,N}\neq 0$, then $G/N_{1}\cong G/N_{2}$.
\end{proposition}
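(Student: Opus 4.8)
The plan is to reduce everything to the multiplicative behaviour of the numbers $m_{G,N}$ along chains of normal subgroups, which follows directly from the functoriality of deflation. Concretely, if $M\unlhd G$ and $M\leq N\unlhd G$, then $N/M\unlhd G/M$ and the deflation functors compose as $\mathrm{Def}^G_{G/N}=\mathrm{Def}^{G/M}_{G/N}\circ\mathrm{Def}^G_{G/M}$. Applying both sides to the idempotent $e_G^G$ and using the formula $\mathrm{Def}^G_{G/N}e_G^G=m_{G,N}e_{G/N}^{G/N}$ twice, I would obtain the key identity
\[ m_{G,N}=m_{G,M}\cdot m_{G/M,\,N/M}. \]
First I would record two consequences. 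Since a product of rationals is nonzero only if each factor is, the identity shows that the set $\mathcal{N}=\{N\unlhd G:m_{G,N}\neq 0\}$ is closed downward: if $N\in\mathcal{N}$ and $M\leq N$ then $M\in\mathcal{N}$, and in particular $\mathcal{N}$ is closed under intersection. Second, if $N$ is maximal in $\mathcal{N}$, then for every $N\subsetneq L\unlhd G$ we have $m_{G,L}=0$, whence $m_{G/N,\,L/N}=m_{G,L}/m_{G,N}=0$; as every nontrivial normal subgroup of $G/N$ has the form $L/N$, this says precisely that $G/N$ is a $B$-group.

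Now take $N_1,N_2$ maximal in $\mathcal{N}$. If one contains the other, maximality forces $N_1=N_2$ and there is nothing to prove, so I assume they are incomparable. Set $K=N_1\cap N_2\in\mathcal{N}$. Applying the identity along $K\leq N_i$ shows that $N_i/K$ is maximal in $\mathcal{N}(G/K)$ and that $(N_1/K)\cap(N_2/K)$ is trivial; since $(G/K)/(N_i/K)\cong G/N_i$, I may replace $G$ by $G/K$ and assume from the outset that $N_1\cap N_2=1$. Then $N_1$ and $N_2$ centralise each other, $N_1N_2=N_1\times N_2$, and both $G/N_1$ and $G/N_2$ are $B$-groups admitting the common further quotient $Q=G/N_1N_2$. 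Computing $m_{G,N_1N_2}$ along the two chains $1\leq N_1\leq N_1N_2$ and $1\leq N_2\leq N_1N_2$ yields the symmetric relation $m_{G,N_1}\,m_{G/N_1,\,N_1N_2/N_1}=m_{G,N_2}\,m_{G/N_2,\,N_1N_2/N_2}$, both sides being $0$ because $N_1N_2$ strictly contains each $N_i$.

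The crux, and the step I expect to be the main obstacle, is to promote this numerical and lattice-theoretic picture into an actual isomorphism $G/N_1\cong G/N_2$. The route I would take is to exploit the commutation (Mackey) formula for deflation after inflation,
\[ \mathrm{Def}^G_{G/N_1}\,\mathrm{Inf}^{G/N_2}_G=\mathrm{Inf}^{G/N_1N_2}_{G/N_1}\,\mathrm{Def}^{G/N_2}_{G/N_1N_2}, \]
together with the mark computation $\mathrm{Inf}^{G/N}_G e_{G/N}^{G/N}=\sum_{HN=G}e_H^G$ expressing the inflated top idempotent as the sum of the primitive idempotents indexed by the supplements of $N$. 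Feeding $e_{G/N_2}^{G/N_2}$ through the commutation formula, the right-hand side vanishes because $G/N_2$ is a $B$-group, which pins down $\mathrm{Def}^G_{G/N_1}e_G^G=m_{G,N_1}e_{G/N_1}^{G/N_1}$ in terms of the deflations of the lower idempotents $e_H^G$ attached to the proper supplements $H$ of $N_2$; carrying this out symmetrically in $N_1$ and $N_2$ should match the two top idempotents and hence identify the two quotient $B$-groups up to isomorphism. The delicate point is controlling the deflations $\mathrm{Def}^G_{G/N_1}e_H^G$ for these proper supplements, and this is exactly where the explicit structure of $B$-groups, and if necessary the finer results of Bouc on the idempotents of the Burnside ring, would have to be brought in.
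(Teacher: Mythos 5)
A preliminary remark: the paper does not prove this statement at all --- it is imported verbatim, with citation, from Bouc's book [Bo2, Proposition 5.4.10] --- so your proposal can only be measured against Bouc's argument and on its own merits, not against anything in the text above.

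Your preparatory steps are correct and standard: the multiplicativity $m_{G,M}=m_{G,N}\,m_{G/N,M/N}$ (this is [Bo2, Proposition 5.3.1], quoted as Proposition 2.10 in the paper), the downward closure of $\mathcal{N}=\{N\unlhd G : m_{G,N}\neq 0\}$, the fact that $G/N$ is a $B$-group whenever $N$ is maximal in $\mathcal{N}$, and the reduction to $N_1\cap N_2=1$ by passing to $G/(N_1\cap N_2)$ are all sound. The commutation relation $\mathrm{Def}^G_{G/N_1}\mathrm{Inf}^{G/N_2}_G=\mathrm{Inf}^{G/N_1N_2}_{G/N_1}\mathrm{Def}^{G/N_2}_{G/N_1N_2}$ is a genuine relation in the biset category, and the inflation formula $\mathrm{Inf}^{G/N}_G e^{G/N}_{G/N}=\sum e^G_H$ is correct provided the sum runs over \emph{conjugacy classes} of subgroups $H$ with $HN=G$. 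But the final step, which you yourself flag with ``should match'' and ``would have to be brought in'', is exactly where the whole content of the proposition lies, and the mechanism you sketch cannot deliver it. Feeding $e^{G/N_2}_{G/N_2}$ through the commutation relation yields, inside $\mathbb{Q}B(G/N_1)$, the identity $m_{G,N_1}e^{G/N_1}_{G/N_1}=-\sum \mathrm{Def}^G_{G/N_1}e^G_H$, summed over classes $[H]\neq[G]$ with $HN_2=G$; by Bouc's deflation formula for primitive idempotents ([Bo2, Theorem 5.2.4]) each summand is a rational multiple of $e^{G/N_1}_{HN_1/N_1}$, so the identity collapses to purely numerical relations among coefficients indexed by subgroups $H$ with $HN_1=HN_2=G$. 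This identity and its mirror image live in the two \emph{different} algebras $\mathbb{Q}B(G/N_1)$ and $\mathbb{Q}B(G/N_2)$, and there is no map along which to ``match'' them; worse, no matching could help even in principle, because for every finite group $H$ the mark homomorphism makes $\mathbb{Q}B(H)$ a split product of copies of $\mathbb{Q}$, one per conjugacy class of subgroups, so the rational Burnside algebra carries essentially no information about the isomorphism type of $H$. Idempotent bookkeeping inside these algebras is provably too coarse to produce a group isomorphism $G/N_1\cong G/N_2$.

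What is actually needed --- and what Bouc uses --- is the full biset-functorial structure connecting the Burnside algebras of different groups: in [Bo2, Chapter 5] the proposition is established within the analysis of the subfunctor of the Burnside functor $\mathbb{Q}B$ generated by $e^G_G$, where a maximal $N$ with $m_{G,N}\neq 0$ is related to the simple biset functor $S_{G/N,\mathbb{Q}}$, and the classification of simple biset functors by pairs (minimal group, representation of its outer automorphism group) is what forces any two maximal kernels to give isomorphic quotients; this is the same circle of ideas that yields Theorem 5.4.11, quoted as Theorem 2.6 in the paper. So your reductions are fine and would shorten a genuine proof, but the crux --- extracting an isomorphism of groups from the vanishing data --- is missing, and the route you propose stays inside structures that cannot see it.
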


\begin{definition}\cite[Notation 2.3]{Bo1} When $G$ is a finite group, and $N\unlhd G$
is maximal such that $m_{G,N}\neq 0$, set $\beta(G)=G/N.$
\end{definition}

\begin{theorem}\cite[Theorem 5.4.11]{Bo2} Let $G$ be a finite group.

1. $\beta(G)$ is a $B$-group.

2. If a $B$-group $H$ is isomorphic to a quotient of $G$, then $H$ is isomorphic to a quotient of $\beta(G)$.

3. Let $N\unlhd G$. The following conditions are equivalent:

\quad\quad (a) $m_{G,N}\neq 0$.

\quad\quad (b) The group $\beta(G)$ is isomorphic to a quotient of $G/N$.

\quad\quad (c) $\beta(G)\cong \beta(G/N)$.
\end{theorem}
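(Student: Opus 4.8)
The engine of the theorem is a multiplicativity formula for the numbers $m_{G,N}$, which I would establish first from the transitivity of deflation. If $N\subseteq M$ are both normal in $G$, then $\mathrm{Def}^{G}_{G/M}=\mathrm{Def}^{G/N}_{G/M}\circ\mathrm{Def}^{G}_{G/N}$ under the identification $(G/N)/(M/N)\cong G/M$. Evaluating both sides at $e_{G}^{G}$ and using the defining relation $\mathrm{Def}^{G}_{G/N}e_{G}^{G}=m_{G,N}e_{G/N}^{G/N}$ (for $G$ and, inside, for $G/N$) yields
\[
m_{G,M}\,e_{G/M}^{G/M}=m_{G,N}\,\mathrm{Def}^{G/N}_{G/M}e_{G/N}^{G/N}=m_{G,N}\,m_{G/N,M/N}\,e_{G/M}^{G/M}.
\]
Comparing the coefficients of the top idempotent $e_{G/M}^{G/M}$ gives
\begin{equation*}
m_{G,M}=m_{G,N}\cdot m_{G/N,M/N}\qquad(N\subseteq M,\ N,M\unlhd G).\tag{$\star$}
\end{equation*}
In particular $m_{G,M}\neq0$ is inherited by every normal subgroup of $G$ lying below $M$, and, for a fixed $N$ with $m_{G,N}\neq0$, the normal subgroups $M\supseteq N$ of $G$ with $m_{G,M}\neq0$ correspond to the normal subgroups of $G/N$ with nonzero $m$.

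Part~1 is then immediate: writing $\beta(G)=G/N_{0}$ with $N_{0}$ maximal subject to $m_{G,N_{0}}\neq0$, any nontrivial $M/N_{0}\unlhd G/N_{0}$ with $m_{G/N_{0},M/N_{0}}\neq0$ would force $m_{G,M}\neq0$ with $M\supsetneq N_{0}$ by $(\star)$, against maximality; so $G/N_{0}$ is a $B$-group. The same bookkeeping gives the forward directions of Part~3. For (a)$\Rightarrow$(c): if $m_{G,N}\neq0$ and $L/N$ is maximal in $G/N$ with $m_{G/N,L/N}\neq0$, then $\beta(G/N)=G/L$, and $(\star)$ shows both that $m_{G,L}\neq0$ and that $L$ is maximal with this property, so $\beta(G)=G/L=\beta(G/N)$. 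The implication (c)$\Rightarrow$(b) is trivial, since $\beta(G/N)$ is by construction a quotient of $G/N$.

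The remaining content, namely Part~2 and the converse directions of Part~3, I would prove together by induction on $|G|$, using Proposition~2.4 and $(\star)$. Given a $B$-group quotient $H=G/M$ with $M\neq1$, choose a minimal normal subgroup $R\subseteq M$; if $m_{G,R}\neq0$ then $\beta(G)\cong\beta(G/R)$ by (a)$\Rightarrow$(c), while $H$ is a $B$-group quotient of the smaller group $G/R$, so the inductive Part~2 for $G/R$ exhibits $H$ as a quotient of $\beta(G/R)\cong\beta(G)$. A parallel reduction proves (c)$\Rightarrow$(a): a good minimal $R\subseteq N$ transfers the hypothesis $\beta(G)\cong\beta(G/N)$ to $\beta(G/R)\cong\beta((G/R)/(N/R))$, the inductive (c)$\Rightarrow$(a) for $G/R$ gives $m_{G/R,N/R}\neq0$, and $(\star)$ recombines this with $m_{G,R}\neq0$ to yield $m_{G,N}\neq0$. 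Finally (b)$\Rightarrow$(c) follows by an order count: if $\beta(G)$ is a quotient of $G/N$, then, being a $B$-group, it is a quotient of $\beta(G/N)$ by the inductive Part~2 for $G/N$, while $\beta(G/N)$ is a quotient of $\beta(G)$ by Part~2 for $G$; mutual quotients of finite groups are isomorphic.

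The main obstacle is the base case of this induction, when every minimal normal subgroup $R\subseteq M$ (resp. $R\subseteq N$) is bad, i.e. $m_{G,R}=0$. Here one must show that passing to $G/R$ strictly shrinks the largest $B$-group quotient, equivalently that no normal subgroup $L$ with $m_{G,L}=0$ can satisfy $G/L\cong\beta(G)$. This cannot be extracted from $(\star)$ alone, since $m_{G,L}$ depends on the individual subgroup $L$ and not merely on the isomorphism type of $G/L$. To handle it I would return to the explicit expression $m_{G,L}=\frac{1}{|G|}\sum_{XL=G}|X|\mu(X,G)$ together with Gluck's formula for $e_{G}^{G}$, and analyse the Möbius sum for a minimal normal $R$ through its supplements in $G$. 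This computation is exactly what underlies Proposition~2.4, and it is the single genuinely hard step; everything else reduces to $(\star)$ and the induction on the order.
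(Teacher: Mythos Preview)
The paper does not prove this theorem at all: it is quoted verbatim from Bouc's book \cite[Theorem~5.4.11]{Bo2} in the preliminaries section (``we collect some known results that will be needed later'') and no argument is supplied. So there is no ``paper's own proof'' to compare against; your proposal is an attempt to reconstruct a proof of a result the paper merely cites.

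On the substance of your reconstruction: the derivation of $(\star)$ from transitivity of deflation is correct (and is precisely Proposition~2.10 of the paper), and your arguments for Part~1 and for (a)$\Rightarrow$(c)$\Rightarrow$(b) are clean and correct. The difficulty you flag is real, but your proposed resolution does not close it. In the ``bad'' case $m_{G,R}=0$, even granting that $|\beta(G/R)|<|\beta(G)|$, your induction for Part~2 only yields that $H$ is a quotient of $\beta(G/R)$; to finish you would need $\beta(G/R)$ to be a quotient of $\beta(G)$, and that is exactly an instance of Part~2 for the same group $G$ (with $\beta(G/R)$ playing the role of $H$), so the argument is circular. Your (b)$\Rightarrow$(c) has the same issue, since it invokes ``Part~2 for $G$'' before Part~2 for $G$ has been secured. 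Finally, the ``equivalently'' you assert is not an equivalence on the nose: the statement ``no normal $L$ with $m_{G,L}=0$ satisfies $G/L\cong\beta(G)$'' is strictly stronger than Proposition~2.4, depends on the embedding of $L$ and not just on $G/L$, and you give no argument for it beyond a gesture at the M\"obius sum. In Bouc's original treatment these implications are not obtained by this kind of induction along minimal normal subgroups; they come from the functorial description of the primitive idempotents of $\mathbb{Q}B$ and the parametrisation of its simple summands by $B$-groups, which makes Part~2 essentially tautological. Your outline recovers the easy directions but does not supply the missing idea.
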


We collect some properties of $m_{G,N}$ that will be needed later.

\begin{proposition}\cite[Proposition 2.5]{Bo1} Let $G$ be a finite group.
Then $G$ is a $B$-group if and only if $m_{G,N}=0$ for any minimal (non-trivial) normal subgroup of $G$.
\end{proposition}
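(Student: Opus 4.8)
The plan is to reduce the assertion to a single Möbius computation about almost simple groups and then to invoke the classification of finite simple groups. The organising principle is part (2) of Theorem 2.6: if some non-solvable $B$-group is (isomorphic to) a quotient of $G$, then it is a quotient of $\beta(G)$, so $\beta(G)$ is not solvable. Thus it suffices to produce a non-solvable $B$-group quotient of $G$. To locate the factor $S$, note that since $S$ occurs exactly once as a composition factor and every chief factor of $G$ is a direct product of isomorphic simple groups, there is a \emph{unique} chief factor $H/K$ with $H/K\cong S$ (uniqueness forces a single copy). Passing to $\bar G=G/K$, the image $\bar H=H/K\cong S$ is a minimal normal subgroup; putting $\bar C=C_{\bar G}(\bar H)$ and using that $S$ is nonabelian simple (so $\bar H\cap\bar C=1$), the quotient $X:=\bar G/\bar C$ is almost simple with $\soc(X)\cong S$, i.e. $\Inn(S)\le X\le\Aut(S)$, and $X$ is a quotient of $G$.

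Next I would reduce ``$X$ is a $B$-group'' to one equation. Because $X$ is almost simple, $S=\soc(X)$ is its unique minimal normal subgroup, so by Proposition 2.7 the group $X$ is a $B$-group if and only if $m_{X,S}=0$. Combined with the previous paragraph, if $m_{X,S}=0$ then $X$ is a non-solvable $B$-group quotient of $G$ and Theorem 2.6(2) finishes the argument. Since we cannot predict which overgroup $\Inn(S)\le X\le\Aut(S)$ is produced by $G$, the genuine target becomes the uniform claim: $m_{X,S}=0$ for \emph{every} almost simple $X$ with socle $S$, provided $S$ avoids the families (1)--(4).

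To attack this Main Computation, I would use the identity $m_{X,S}=\frac{1}{|S|}\sum_{YS=X}|Y\cap S|\,\mu(Y,X)$, obtained from $|Y|/|X|=|Y\cap S|/|S|$ whenever $YS=X$. As a sanity check, when the outer part is trivial ($X=S$) this reads $m_{S,S}=\frac{1}{|S|}\sum_{Y\le S}|Y|\mu(Y,S)$, which counts (up to the factor $|S|$) the single elements generating $S$ and hence vanishes because $S$ is not cyclic; the general statement is the analogue for supplements of $S$. Since $\mu(Y,X)\neq0$ forces $Y$ to be an intersection of maximal subgroups, the sum is controlled by the maximal subgroups $M$ with $MS=X$, which are described explicitly by Aschbacher's classification together with CFSG. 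The aim is to show the resulting alternating sum telescopes to $0$ via a free-action (regular-orbit) argument for $\Out(S)$ on the relevant supplement data. I expect the exceptional families (1)--(4) to be exactly the groups of Lie type carrying a Dynkin-diagram (graph) automorphism: such an automorphism centralises a large subsystem subgroup and produces an extra supplement $Y$ with large $|Y\cap S|$ whose Möbius contribution is not cancelled, whereas outside these families no such automorphism exists and the cancellation is clean.

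The main obstacle I anticipate is precisely this final step: the case-by-case verification, across all almost simple groups with socle $S\notin(1)$--$(4)$, that $m_{X,S}=0$, which requires controlling $\mu(Y,X)$ over every supplement and establishing the vanishing uniformly. Everything preceding it is soft group theory; it is the graph-automorphism dichotomy that simultaneously powers the cancellation and cuts out the stated exceptions.
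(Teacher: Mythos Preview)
Your proposal does not address the stated Proposition at all: Proposition~2.7 is the elementary fact that a finite group $G$ is a $B$-group if and only if $m_{G,N}=0$ for every \emph{minimal} non-trivial normal subgroup $N$. The paper does not prove this; it is quoted from \cite{Bo1}. The proof is a two-line application of the multiplicativity formula (Proposition~2.10): one direction is immediate from the definition, and conversely if $m_{G,N}=0$ for all minimal $N$ but $m_{G,M}\neq 0$ for some $1\neq M\unlhd G$, choose a minimal normal $N\le M$ and use $m_{G,M}=m_{G,N}\,m_{G/N,M/N}=0$, a contradiction. Nothing involving CFSG, almost simple groups, or Aschbacher classes is relevant here.

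What you have actually sketched is an attack on the \emph{Main Theorem} (Theorem~4.1). Compared with the paper's argument, your route is genuinely different and considerably harder. The paper never computes $m_{X,S}$ for almost simple $X$. Instead it runs a minimal-counterexample argument: if $\beta(G)$ were solvable, a minimal normal $N$ with $m_{G,N}\neq 0$ must equal $S$; then either $C_G(N)\neq 1$ (and $G/C_G(N)$ is a smaller counterexample), or $C_G(N)=1$, so $G$ is almost simple with $G/N\hookrightarrow\Out(S)$. The hypothesis on $S$ is exactly that $\Out(S)$ is cyclic modulo some prime $p$; hence $G/N$, and then $\beta(G)\cong\beta(G/N)$, is cyclic modulo $p$, and Baumann's theorem forces $G$ itself to be cyclic modulo $p$, contradicting non-solvability. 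Thus the only case analysis in the paper is the verification (Theorems~4.2--4.3) that $\Out(S)$ is $p$-hypo-elementary outside the four excluded families.

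Your plan, by contrast, asks for the uniform vanishing $m_{X,S}=0$ for \emph{every} almost simple $X$ with socle $S$ outside those families. That is strictly stronger than what the paper needs, and your own final paragraph correctly identifies the obstacle: controlling $\mu(Y,X)$ over all supplements $Y$ of $S$ in $X$ case by case. This is not obviously feasible, and your heuristic that the exceptions (1)--(4) are ``exactly the groups with a graph automorphism'' is not quite right either (e.g.\ $D_n(q)$ with $n>4$ and $q$ odd has a graph automorphism yet is \emph{not} excluded, because $\Out(S)$ is still cyclic modulo $2$). So even as a strategy for the Main Theorem, the paper's approach via Baumann's theorem is both simpler and the one that actually explains the shape of the exclusion list.
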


\begin{proposition}\cite[Proposition 5.6.1]{Bo2} Let $G$ be a finite group. Then
$m_{G,G}=0$ if and only if $G$ is not cyclic.
If $P$ be cyclic of order $p$ and $p$ be a prime number,
then $m_{P, P}=\frac{p-1}{p}$.
\end{proposition}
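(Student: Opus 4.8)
The plan is to reduce both assertions to a single counting identity for the Möbius function of the subgroup lattice. First I would specialize the defining formula to the case $N=G$: since $XG=G$ holds for every subgroup $X\leq G$, the summation condition $XN=G$ becomes vacuous, and the defining formula for $m_{G,N}$ becomes
$$m_{G,G}=\frac{1}{|G|}\sum_{X\leq G}|X|\,\mu(X,G),$$
where the sum now runs over \emph{all} subgroups of $G$.

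The key step is to evaluate this sum via Möbius inversion. For a subgroup $H\leq G$ write $a(H)$ for the number of elements $g\in G$ with $\langle g\rangle=H$, so that $a(H)=0$ unless $H$ is cyclic. Since every element $g$ of a subgroup $K$ generates a unique cyclic subgroup $\langle g\rangle\leq K$, each element of $K$ is counted exactly once in $\sum_{H\leq K}a(H)$, whence $\sum_{H\leq K}a(H)=|K|$ for every $K\leq G$. Möbius inversion over the poset of subgroups of $G$ (with $f(K)=|K|$ and $g(H)=a(H)$) then yields
$$a(G)=\sum_{H\leq G}\mu(H,G)\,|H|,$$
so that $m_{G,G}=a(G)/|G|$.

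It remains to interpret $a(G)$, which is precisely the number of elements of $G$ that generate $G$ as a cyclic group. If $G$ is not cyclic, no single element generates it, so $a(G)=0$ and $m_{G,G}=0$; conversely, if $G$ is cyclic of order $n$ then $a(G)=\varphi(n)>0$, and hence $m_{G,G}=\varphi(n)/n\neq 0$. This establishes the equivalence that $m_{G,G}=0$ if and only if $G$ is not cyclic. For the final assertion, if $P$ is cyclic of order $p$ with $p$ prime, then every non-identity element generates $P$, so $a(P)=\varphi(p)=p-1$ and therefore $m_{P,P}=\frac{p-1}{p}$.

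I expect the main obstacle to be pinning down the correct direction and sign convention of the Möbius inversion, so that the formula $a(G)=\sum_{H\leq G}\mu(H,G)|H|$ comes out exactly as stated, together with a clean justification of the counting identity $\sum_{H\leq K}a(H)=|K|$. Once these two ingredients are in place, both statements follow immediately without any further computation.
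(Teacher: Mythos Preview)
Your argument is correct. The specialization $m_{G,G}=\frac{1}{|G|}\sum_{X\leq G}|X|\,\mu(X,G)$ is immediate, the counting identity $\sum_{H\leq K}a(H)=|K|$ is clear since every element of $K$ generates a unique cyclic subgroup, and the M\"obius inversion step is applied in the right direction: from $f(K)=\sum_{H\leq K}g(H)$ one deduces $g(G)=\sum_{H\leq G}\mu(H,G)f(H)$, so no sign or orientation issue arises. The interpretation $a(G)=\#\{g\in G:\langle g\rangle=G\}$ then gives both assertions at once.

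As for comparison with the paper: the paper does not supply its own proof of this proposition; it is simply quoted from \cite[Proposition~5.6.1]{Bo2} as part of the preliminary material in Section~2. Your proof is in fact the classical argument (and essentially the one Bouc gives in \cite{Bo2}), so there is nothing to contrast.
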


\begin{remark} If $G$ is a finite simple group, then $G$ is a $B$-group if and only if
$G$ is not abelian.
\end{remark}

\begin{proposition}\cite[Proposition 5.3.1]{Bo2} Let $G$ be a finite group. If
$M$ and $N$ are normal subgroup of $G$ with $N\leq M$, then
$$m_{G,M}=m_{G,N}m_{G/N, M/N}.$$
\end{proposition}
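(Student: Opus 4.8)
The plan is to deduce the multiplicativity of $m_{G,-}$ from the transitivity of the deflation operation on Burnside rings, using the defining relation $\mathrm{Def}^G_{G/N} e_G^G = m_{G,N} e_{G/N}^{G/N}$ recorded after Definition 2.1. Throughout, $e_G^G$ denotes the primitive idempotent of the rational Burnside ring $\mathbb{Q}B(G)$ attached to the class of $G$ itself, so that $m_{G,N}$ is, by that relation, precisely the scalar by which deflation from $G$ to $G/N$ carries $e_G^G$ onto $e_{G/N}^{G/N}$.

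First I would invoke the transitivity of deflation. For $N \le M$ both normal in $G$, the third isomorphism theorem gives a canonical isomorphism $(G/N)/(M/N) \cong G/M$, and under this identification the deflation maps compose as
$$\mathrm{Def}^G_{G/M} = \mathrm{Def}^{G/N}_{(G/N)/(M/N)} \circ \mathrm{Def}^G_{G/N}.$$
This is the standard functoriality of deflation (each deflation is realized by a quotient biset, and the composite of the two quotient bisets is the quotient biset for $G \to G/M$); no classification of groups enters here. Next I would apply both sides to $e_G^G$. On the left, the defining relation gives $\mathrm{Def}^G_{G/M} e_G^G = m_{G,M}\, e_{G/M}^{G/M}$. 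On the right, deflating to $G/N$ first gives $\mathrm{Def}^G_{G/N} e_G^G = m_{G,N}\, e_{G/N}^{G/N}$, and then deflating from $G/N$ to $(G/N)/(M/N)$, applying the defining relation inside the group $G/N$ with normal subgroup $M/N$, yields
$$\mathrm{Def}^{G/N}_{(G/N)/(M/N)}\bigl(m_{G,N}\, e_{G/N}^{G/N}\bigr) = m_{G,N}\, m_{G/N,M/N}\, e_{(G/N)/(M/N)}^{(G/N)/(M/N)}.$$

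Under $(G/N)/(M/N)\cong G/M$ the idempotent $e_{(G/N)/(M/N)}^{(G/N)/(M/N)}$ is carried to $e_{G/M}^{G/M}$, so comparing the two expressions gives $m_{G,M}\, e_{G/M}^{G/M} = m_{G,N}\, m_{G/N,M/N}\, e_{G/M}^{G/M}$. Since $e_{G/M}^{G/M}$ is a nonzero (primitive) idempotent of $\mathbb{Q}B(G/M)$, I can cancel it and conclude $m_{G,M} = m_{G,N}\, m_{G/N,M/N}$. The step I expect to carry the real weight is the transitivity of deflation together with the defining relation for $m$; once these are in hand the argument is purely formal. As a more self-contained alternative one can argue directly from Definition 2.1: the correspondence $Z \mapsto Z/N$ between subgroups of $G$ containing $N$ and subgroups of $G/N$ preserves indices and Möbius values, since the interval $[Z,G]$ is order-isomorphic to $[Z/N, G/N]$, which rewrites the factor as $m_{G/N,M/N} = \frac{1}{|G|}\sum_{N\le Z,\, ZM=G} |Z|\,\mu_G(Z,G)$. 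The obstacle in this second route is to show that the resulting product of two Möbius sums collapses to the single sum defining $m_{G,M}$, which needs a genuine Möbius-inversion argument on the subgroup lattice rather than a purely formal manipulation.
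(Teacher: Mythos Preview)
The paper does not supply its own proof of this proposition; it is quoted verbatim from \cite[Proposition 5.3.1]{Bo2} as background material, with no argument given. Your proof via transitivity of deflation is correct and is in fact the standard one (and essentially the one in Bouc's book): compute $\mathrm{Def}^G_{G/M}e_G^G$ directly and as the composite $\mathrm{Def}^{G/N}_{(G/N)/(M/N)}\circ\mathrm{Def}^G_{G/N}$ applied to $e_G^G$, then compare coefficients of the nonzero idempotent $e_{G/M}^{G/M}$. Your alternative combinatorial route through the M\"obius function is also viable but, as you note, requires more work; the biset-functor argument is the cleaner one and is what the cited reference does.
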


\section{\bf Some results of the Conjectures}

The Conjecture have been proven in some special cases. We collect two results that will be needed later.

When $p$ is a prime number, recall that a finite group $G$ is called cyclic modulo $p$ (or $p$-hypo-elementary) if
$G/O_{p}(G)$ is cyclic. And M. Baumann has proven the Conjecture under the additional assumption that finite group $G$ is cyclic modulo $p$ in \cite{Ba}.

\begin{theorem}\cite[Theorem 3]{Ba} Let $p$ be a prime number and $G$ be a finite group. Then $\beta(G)$ is cyclic modulo $p$ if
and only if $G$ is cyclic modulo $p$.
\end{theorem}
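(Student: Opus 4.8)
The plan is to prove the two implications separately. The forward direction, that $G$ cyclic modulo $p$ forces $\beta(G)$ cyclic modulo $p$, is immediate once I observe that the class of groups cyclic modulo $p$ is closed under quotients: if $N\unlhd G$ then $O_p(G)N/N\le O_p(G/N)$, so $(G/N)/O_p(G/N)$ is a quotient of $(G/N)/\big(O_p(G)N/N\big)\cong G/O_p(G)N$, which in turn is a quotient of the cyclic group $G/O_p(G)$. Since $\beta(G)$ is by definition a quotient of $G$, the implication follows.

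For the converse I argue by induction on $|G|$, keeping the hypothesis that $\beta(G)$ is cyclic modulo $p$ and deducing the same for $G$. If $G$ is itself a $B$-group, then $m_{G,N}=0$ for every nontrivial $N$, so by Remark 2.2 the maximal normal subgroup with $m_{G,N}\neq0$ is $N=1$ and $\beta(G)=G$, whence the conclusion is trivial. Otherwise, by Proposition 2.7 there is a minimal normal subgroup $M$ with $m_{G,M}\neq0$; by Theorem 2.6(3) we have $\beta(G/M)\cong\beta(G)$, which is cyclic modulo $p$, so the inductive hypothesis applied to the smaller group $G/M$ yields that $G/M$ is cyclic modulo $p$. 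It therefore suffices to prove the lifting statement: if $M$ is a minimal normal subgroup with $m_{G,M}\neq0$ and $G/M$ is cyclic modulo $p$, then $G$ is cyclic modulo $p$.

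The strategy for the lifting statement is to pin down the isomorphism type of such an $M$. If $M$ is a $p$-group the statement is immediate, since $M\le O_p(G)$ and $O_p(G/M)=O_p(G)/M$ give $(G/M)/O_p(G/M)\cong G/O_p(G)$, transporting cyclicity. If $M$ is an elementary abelian $q$-group with $q\neq p$, I would exploit that $M$ is an irreducible $\mathbb{F}_q[G/M]$-module: every subgroup $X$ with $XM=G$ then satisfies $X\cap M\in\{1,M\}$ (as $X\cap M\unlhd G$ by minimality), the proper such $X$ are exactly the complements of $M$, each of order $|G|/|M|$ with interval $[X,G]=\{X,G\}$ and hence $\mu(X,G)=-1$. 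This gives the clean formula $m_{G,M}=1-c/|M|$ with $c$ the number of complements. Counting complements through $Z^1(G/M,M)$ and using that $H^1$ and $H^2$ of the $p$-group $O_p(G/M)$ with coefficients in the coprime module $M$ vanish, the inflation--restriction sequence reduces these cohomology groups to those of the cyclic group $(G/M)/O_p(G/M)$ acting on $M^{O_p(G/M)}$; when $M$ is noncentral this forces $c=|M|$, i.e. $m_{G,M}=0$, so no minimal normal subgroup with $m_{G,M}\neq0$ is of this type. The surviving abelian possibility is a central $M\cong C_q$, and here the same complement count ties the value $m_{G,M}\neq0$ to the cyclicity of the Sylow $q$-subgroup of the abelian $p'$-group $G/O_p(G)$; this is an elementary but delicate verification which I would carry out by hand.

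The genuine obstacle is the case where $M$ is nonabelian, say $M\cong S^n$ for a nonabelian simple group $S$. Here $M\cap O_p(G)=1$, so $G/O_p(G)$ contains a copy of the nonabelian group $M$ and $G$ is automatically not cyclic modulo $p$; to obtain a contradiction one must therefore prove outright that $m_{G,M}=0$ for every such $M$ with $G/M$ cyclic modulo $p$ (hence solvable). The difficulty is that for nonabelian $M$ the intersection $X\cap M$ of a supplement with $M$ need not be normal in $G$, so the simple complement count collapses and one must instead analyse the whole poset $\{X:XM=G\}$ directly. I expect this to be the hard part: it requires the finer Bouc-type evaluation of $m_{G,N}$ for nonabelian $N$ in terms of the almost simple structure $\Inn(S)\le G/C_G(M)\le \Aut(S)\wr\mathfrak{S}_n$, and it is exactly this analysis, involving the outer automorphisms and Schur multiplier of $S$, that governs the exceptional families excluded in the Main Theorem.
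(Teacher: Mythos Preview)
The paper does not prove this statement at all: Theorem~3.1 is quoted verbatim from Baumann~\cite{Ba} and used as a black box in the proof of Theorem~4.1. The only hint the paper gives about Baumann's argument is the sentence following the statement: ``The reason to consider the $p$-hypo-elementary groups is the author using the useful theorem by Conlon \cite[Lemma 27]{Ba}.'' In other words, Baumann's proof is representation-theoretic, going through Conlon's induction theorem for permutation modules (cf.\ \cite[(80.51)]{CR}), and is entirely different in spirit from your direct combinatorial attack on the numbers $m_{G,M}$.

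Your proposal, as it stands, is not a proof but an outline with an explicit gap. The forward direction and the inductive reduction to a minimal normal subgroup $M$ with $m_{G,M}\neq 0$ are fine, and your treatment of the abelian cases is plausible. But you yourself flag the nonabelian case $M\cong S^n$ as ``the genuine obstacle'' and do not resolve it; you only speculate about what it would require. Without that case the argument is incomplete, since nothing you have written excludes the possibility that $G$ has a nonabelian minimal normal subgroup $M$ with $m_{G,M}\neq 0$ and $G/M$ cyclic modulo $p$.

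Finally, your closing remark is a misreading of the paper. Baumann's theorem is an unconditional equivalence with no exceptional families; the exclusions in the Main Theorem concern a different question entirely, namely for which nonabelian simple $S$ the group $\Out(S)$ is itself cyclic modulo some prime. That analysis feeds into Theorem~4.1 via the hypothesis on $\Out(S)$, not into the proof of Baumann's result. So the ``hard part'' you identify is not what governs the excluded types $A_n(q)$, $D_n(q)$, $E_6(q)$, ${}^2A_n(q)$; those exclusions arise in Theorem~4.3, downstream of the present statement, and Baumann's theorem must already be available in full generality before the paper can invoke it in Case~2 of Theorem~4.1.
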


The reason to consider the $p$-hypo-elementary groups is the author using the useful theorem by Conlon \cite[Lemma 27]{Ba}.
For the Conlon theorem, we refer to \cite[(80.51)]{CR}.

In \cite{Bo1}, S. Bouc has proven the Conjecture under the additional assumption that finite group $G$ is solvable.

\begin{theorem}\cite[Theorem 3.1]{Bo1} Let $G$ be a solvable finite group. Then $\beta(G)$ is nilpotent if
and only if $G$ is nilpotent.
\end{theorem}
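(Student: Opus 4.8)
The plan is to show that the unique composition factor $S$ cannot be destroyed by passage to $\beta(G)$; since $S$ is nonabelian, its survival forces $\beta(G)$ to be non-solvable. Write $\beta(G)=G/N_{0}$ with $N_{0}\unlhd G$ maximal subject to $m_{G,N_{0}}\neq 0$ (the definition of $\beta$ from \cite{Bo1,Bo2}). By Jordan--H\"older the multiset of composition factors of $G$ is the disjoint union of those of $N_{0}$ and those of $G/N_{0}$, so the factor $S$, occurring with multiplicity one, is a composition factor of exactly one of $N_{0}$ and $G/N_{0}$. It therefore suffices to rule out the first possibility: if $S$ is not a composition factor of $N_{0}$, then it is one of $\beta(G)=G/N_{0}$, whence $\beta(G)$ is not solvable.

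So suppose, for contradiction, that $S$ is a composition factor of $N_{0}$. Refining $1\unlhd N_{0}\unlhd G$ to a chief series of $G$, the single chief factor involving $S$ has the form $L/K$ with $K\unlhd L\unlhd N_{0}$, all normal in $G$. As a chief factor it is a direct product of copies of a simple group, and since $S$ has multiplicity one we get $L/K\cong S$, a single copy --- this is precisely where the hypothesis ``only one factor isomorphic to $S$'' enters. Applying the multiplicativity relation $m_{G,M}=m_{G,N}\,m_{G/N,M/N}$ of \cite{Bo2} twice along $K\le L\le N_{0}$ gives
\[
m_{G,N_{0}}=m_{G,K}\;m_{G/K,\,L/K}\;m_{G/L,\,N_{0}/L},
\]
so each factor is nonzero; in particular $m_{G/K,\,L/K}\neq 0$. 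Writing $H=G/K$ and $T=L/K$, the theorem is reduced to the following.

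\textbf{Main Lemma.} \emph{If $S$ is a nonabelian simple group outside the list (1)--(4) and $T\cong S$ is a minimal normal subgroup of a finite group $H$, then $m_{H,T}=0$.}

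To prove the Main Lemma I would first reduce to the almost-simple case. Since $T$ is simple and minimal normal, $C:=C_{H}(T)$ satisfies $C\cap T=1$, and $\bar H:=H/C$ is almost simple with $\soc(\bar H)=TC/C\cong S$ and $\Inn(S)\le\bar H\le\Aut(S)$. A supplement $X$ of $T$ in $H$ (a subgroup with $XT=H$) projects to a supplement of $\soc(\bar H)$ in $\bar H$, and conversely such supplements lift; tracking the weights $|X|$ and the values $\mu(X,H)$ through this correspondence (via multiplicativity, with $C$ contributing a controlled factor) reduces the computation of $m_{H,T}$ to that of $m_{\bar H,\,\soc(\bar H)}$ in $\bar H$. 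It then remains to expand
\[
m_{\bar H,\,\soc(\bar H)}=\frac{1}{|\bar H|}\sum_{X\soc(\bar H)=\bar H}|X|\,\mu(X,\bar H)
\]
as an alternating M\"obius sum over the supplements of the socle, which are parametrised by the subgroups of $\Out(S)=\bar H/\soc(\bar H)$ and their sections.

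The crux, and the step I expect to be the main obstacle, is to show that this alternating sum vanishes exactly when $S$ avoids the exceptional list. Using the classification of finite simple groups one treats the types of $S$ in turn: for alternating and sporadic $S$ the group $\Out(S)$ is small and the sum is evaluated directly, while for $S$ of Lie type one invokes the Steinberg description $\Out(S)=\mathrm{Outdiag}(S)\rtimes(\Phi\times\Gamma)$, with $\Phi$ the cyclic field automorphisms and $\Gamma$ the graph automorphisms coming from symmetries of the Dynkin diagram. When $\Gamma=1$ --- the generic situation --- the supplements arising from field and diagonal automorphisms pair off and their M\"obius contributions cancel, forcing $m_{\bar H,\,\soc(\bar H)}=0$. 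A nontrivial graph automorphism produces additional supplements that break this cancellation, and these are exactly the types $A_{n}(q)$ with $n\ge 2$, $E_{6}(q)$, and $^{2}A_{n}(q)$ with $n\ge 2$, together with $D_{n}(q)$ in the delicate cases $n=4$ (where $\Gamma$ is the non-abelian triality group $S_{3}$) and $p=2$ (where graph and field automorphisms intertwine through the exceptional isogeny). Verifying that the cancellation nonetheless persists for the remaining $D_{n}(q)$ with $n\neq 4$ and $p$ odd, and organising the field/diagonal bookkeeping uniformly across all Lie types, is the hard part; once the Main Lemma is established for every non-exceptional $S$, the reduction above completes the proof that $\beta(G)$ is not solvable.
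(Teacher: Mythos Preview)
Your proposal does not address the stated theorem at all. The statement in question is Bouc's result that for a \emph{solvable} finite group $G$, the quotient $\beta(G)$ is nilpotent if and only if $G$ is nilpotent. In a solvable group every composition factor is abelian, so there is no nonabelian simple $S$ to track, and the very first sentence of your argument (``the unique composition factor $S$ \dots\ is nonabelian'') is vacuous. Moreover, the paper does not give its own proof of this theorem: it is quoted verbatim from \cite[Theorem~3.1]{Bo1} as background, so there is nothing in the paper to compare your argument against.

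What you have actually written is an attempted proof of the paper's Main Theorem (equivalently Theorem~4.1). Even read that way, your route diverges substantially from the paper's and contains a real gap. The paper proceeds by minimal counterexample: if $\beta(G)$ were solvable, a minimal normal $N\le M$ with $m_{G,N}\neq 0$ is forced to equal $S$; then either $C_G(N)\neq 1$ and one contradicts minimality by passing to $G/C_G(N)$, or $C_G(N)=1$ and $G$ embeds in $\Aut(S)$, whence $G/N\hookrightarrow\Out(S)$. The whole weight of the argument then rests on the \emph{structural} fact that $\Out(S)$ is cyclic modulo some prime $r$ (proved case-by-case in Theorems~4.2--4.3), combined with Baumann's theorem (Theorem~3.1 here) that $\beta(G)$ cyclic modulo $r$ forces $G$ cyclic modulo $r$, hence solvable --- a contradiction. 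No M\"obius sums over supplements of the socle are ever computed.

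By contrast, your ``Main Lemma'' asks for a direct vanishing of $m_{\bar H,\soc(\bar H)}$ for every almost-simple $\bar H$ with socle $S$. Two problems: first, the reduction from $m_{H,T}$ to $m_{\bar H,\soc(\bar H)}$ via $C=C_H(T)$ is not justified --- supplements of $T$ in $H$ do not correspond cleanly to supplements in $\bar H$, and there is no multiplicativity formula of the shape $m_{H,T}=m_{\bar H,\soc(\bar H)}\cdot(\text{factor from }C)$ available in \cite{Bo2}. Second, the claimed cancellation mechanism (``when $\Gamma=1$ the supplements pair off'') is asserted, not proved, and carrying it out would amount to a separate and much harder computation than anything in the paper. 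The paper sidesteps all of this by invoking Baumann's theorem, which is exactly the leverage you are missing.
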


\section{\bf Proof of the Main Theorem}

In this section, we will prove the main theorem.
\begin{theorem} Let $G$ be a finite group. Let $S$ be a non-abelian simple group and $\mathrm{Out}(S)=\mathrm{Aut}(S)/\mathrm{Inn}(S)$ be cyclic modulo $p$ (or $p$-hypo-elementary). If the group $G$ has only one factor which
is isomorphic to $S$,
then $\beta(G)$ is not solvable.
\end{theorem}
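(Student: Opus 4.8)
The plan is to realise the simple group $S$ inside a suitable almost simple quotient of $G$, to prove that this quotient is itself a $B$-group by means of Baumann's Theorem~3.1, and finally to transport it into $\beta(G)$ using Theorem~2.6(2); since the quotient is non-solvable, so is $\beta(G)$.

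First I would construct the almost simple quotient. Because $S$ occurs exactly once as a composition factor of $G$, a chief series of $G$ has precisely one chief factor that involves $S$, and the multiplicity-one hypothesis forces that factor to be isomorphic to $S$ itself rather than to a proper power $S^{k}$ with $k\ge 2$. Hence there are normal subgroups $K\unlhd L\unlhd G$ with $\bar L:=L/K\cong S$ a minimal normal subgroup of $\bar G:=G/K$. Setting $C:=C_{\bar G}(\bar L)\unlhd\bar G$ and using $\bar L\cap C=Z(\bar L)=1$, conjugation embeds $A:=\bar G/C$ into $\Aut(S)$ with image containing $\bar L C/C\cong\Inn(S)$. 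Thus $A$ is almost simple with socle $S$, i.e.\ $\Inn(S)\cong S\unlhd A\le\Aut(S)$, and $A$ is a quotient of $G$ (it is $G/\tilde C$ for the preimage $\tilde C\unlhd G$ of $C$). In particular $S$ is the unique minimal normal subgroup of $A$, so every nontrivial normal subgroup of $A$ contains $S$.

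Next I would pin down $\beta(A)$. Fix the prime $p$ for which $\Out(S)$ is cyclic modulo $p$. The quotient $A/S$ embeds in $\Out(S)$, and the class of groups cyclic modulo $p$ is closed under subgroups and quotients (if $U/O_{p}(U)$ is cyclic then so is every subquotient, since $O_{p}$ is carried into $O_{p}$ under these operations); hence every proper quotient of $A$ — these are exactly the groups $A/N$ with $S\le N$, all of them quotients of $A/S$ — is cyclic modulo $p$. On the other hand $A$ itself is \emph{not} cyclic modulo $p$: as $S$ is nonabelian simple one has $O_{p}(A)\le C_{A}(S)=1$, so $A/O_{p}(A)=A$ contains the nonabelian group $S$ and cannot be cyclic. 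Now $\beta(A)$ is a quotient of $A$, and by Baumann's Theorem~3.1 it is not cyclic modulo $p$ (because $A$ is not). Since the only quotient of $A$ failing to be cyclic modulo $p$ is $A$ itself, I conclude $\beta(A)=A$; thus $A$ is a $B$-group, and containing $S$ it is non-solvable. Transferring back, the $B$-group $A$ is a quotient of $G$, so Theorem~2.6(2) makes $A$ a quotient of $\beta(G)$, whence $\beta(G)$ is non-solvable.

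The main obstacle I anticipate is the middle step, the clean identification $\beta(A)=A$: it depends on the closure of the ``cyclic modulo $p$'' property under subquotients (guaranteeing that every proper quotient of $A$ is cyclic modulo $p$) together with the precise application of Baumann's theorem, and on checking that the constructed $A$ is genuinely almost simple with $A/S\le\Out(S)$ cyclic modulo $p$. The remaining ingredients — extracting the chief factor $\cong S$ from the multiplicity-one hypothesis and the final transfer through Theorem~2.6(2) — are routine once this is secured.
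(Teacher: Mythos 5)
Your proposal is correct, but it is organized quite differently from the paper's proof. The paper argues by minimal counterexample: assuming $\beta(G)$ solvable with $|G|$ minimal, it picks a minimal normal subgroup $N\le M$ (where $\beta(G)=G/M$), uses the multiplicativity $m_{G,M}=m_{G,N}\,m_{G/N,M/N}$ to get $m_{G,N}\neq 0$ and $\beta(G/N)\cong\beta(G)$, forces $N\cong S$ by minimality, and then splits into two cases: if $C_G(N)\neq 1$ it derives a contradiction from the smaller group $G/C_G(N)$ via Bouc's Theorem 2.6(2), and if $C_G(N)=1$ it has $N\unlhd G\le\Aut(N)$, so $G/N$ is cyclic modulo $p$, whence $\beta(G)=\beta(G/N)$ is cyclic modulo $p$ and Baumann's theorem makes $G$ itself cyclic modulo $p$, contradicting non-solvability. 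You instead give a direct, induction-free argument: you extract the unique chief factor isomorphic to $S$ (multiplicity one rules out $S^k$, $k\ge 2$, exactly as in the paper), pass to the almost simple quotient $A=\bar G/C_{\bar G}(\bar L)$ with $\Inn(S)\le A\le\Aut(S)$, and prove the stronger intermediate statement that $A$ is itself a $B$-group: every proper quotient of $A$ factors through $A/S\le\Out(S)$ and is cyclic modulo $p$ (the class being closed under subquotients), while Baumann's theorem shows $\beta(A)$ is not cyclic modulo $p$ since $O_p(A)=1$, forcing $\beta(A)=A$; Theorem 2.6(2) then makes the non-solvable group $A$ a quotient of $\beta(G)$. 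Both arguments rest on the same two external pillars — Baumann's theorem and Bouc's Theorem 2.6(2) — but your route replaces the paper's induction and centralizer dichotomy with an explicit construction, and it yields slightly more: not just that $\beta(G)$ is non-solvable, but that it has an explicit almost simple quotient with socle $S$, and along the way that every almost simple group with socle $S$ and $\Out(S)$ cyclic modulo $p$ is a $B$-group. The paper's version, by contrast, stays entirely within the $m_{G,N}$ formalism and needs no chief-series bookkeeping. All the steps you flag as delicate (subquotient closure of the $p$-hypo-elementary class, $C_{\Aut(S)}(\Inn(S))=1$ giving $S$ as unique minimal normal subgroup of $A$, and $O_p(A)=1$) check out, so the identification $\beta(A)=A$ is sound.
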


\begin{proof}Let $G$ be a minimal counterexample, that is $\beta(G)$ is solvable, and,
if $L$ is a finite group with $|L|\lneq |G|$ and, $L$ has only one factor which
is isomorphic to $S$, then $\beta(L)$ is not solvable.

Let $M\unlhd G$ is maximal such that $m_{G, M}\neq 0$, then $\beta(G)=G/M$ is solvable.
Here, $M\neq 1$ because $G$ is not solvable.
Now we can assume that $1\neq N\leq M$ is a minimal normal subgroup of $G$, thus we have
$$m_{G, M}=m_{G, N}m_{G/N, M/N}$$
by \cite[Proposition 5.3.1]{Bo2}.
Thus $m_{G, N}\neq 0$. So by \cite[Proposition 5.4.11]{Bo2}, thus $\beta(G/N)=\beta(G)$ is solvable.
Since $N$ is a minimal normal subgroup of $G$, thus $N$ is abelian, or
$$N\cong \underbrace{T\times T\times \cdots \times T}_k.$$
Here, $T$ is a nonabelian simple group.

Since $\beta(G/N)$ is solvable and $|G/N|\lneq |G|$, thus $G/N$ does not have a one factor which
is isomorphic to $S$. That is $N$ has only a one factor which
is isomorphic to $S$, it implies that
$$N\cong T\cong S.$$

We will consider the cases whether $C_{G}(N)=1$ in the following.

\textbf{Case 1.} If $C_{G}(N)\neq 1$, thus $|G/C_{G}(N)|\lneq |G|$ and $G/C_{G}(N)$ has only one factor which
is isomorphic to $S$. Also $\beta(G/C_{G}(N))$ is a $B$-group and is a quotient of $G/C_{G}(N)$,
then $\beta(G/C_{G}(N))$ is isomorphic
to a quotient of $\beta(G)$ by \cite[Theorem 2.4.2]{Bo1}. So $\beta(G/C_{G}(N))$ is solvable.
That is a contradiction to the minimal counterexample $G$.

\textbf{Case 2.} If $C_{G}(N)=1$, we can see that
$$N\unlhd G\leq \mathrm{Aut}(N).$$
Here, $N\cong S$. Since $\mathrm{Out}(S)=\mathrm{Aut}(S)/\mathrm{Inn}(S)$ is cyclic modulo $p$ for some prime number $p$,
 thus $G/N$ is also cyclic modulo $p$ because $G/N$ is isomorphic to a subgroup of $\mathrm{Aut}(S)/\mathrm{Inn}(S)$.
 Since $m_{G,N}\neq 0$, thus $\beta(G)$ is cyclic modulo $p$.
It implies $G$ is cyclic modulo $p$ by \cite{Ba}, that is $G$ is solvable. That is a contradiction.

Therefore, we have $\beta(G)$ is not solvable.
\end{proof}

Now, we will only need to prove that the outer automorphism of almost nonabelian simple groups
 are cyclic modulo $p$ for some prime number $p$.
The Theorem 4.2 is about the case when the simple groups are alternating groups and sporadic groups. And
the Theorem 4.3 is about the case when the simple groups are simple groups of Lie type.
\begin{theorem} Let $S$ be a nonabelian simple, if $S$ is type of alternating group or sporadic group,
then $\mathrm{Out}(S)$ is cyclic modulo $p$ for some prime number $p$.
\end{theorem}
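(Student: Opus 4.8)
The plan is to reduce everything to two elementary observations about the property of being cyclic modulo $p$, and then to run through the (finite) list of alternating and sporadic groups. Recall that a finite group $H$ is cyclic modulo $p$ when $H/O_{p}(H)$ is cyclic. First I would record two facts: (i) if $H$ is itself cyclic, then $H/O_{p}(H)$ is a quotient of a cyclic group, hence cyclic, so $H$ is cyclic modulo $p$ for \emph{every} prime $p$; and (ii) if $H$ is a $p$-group, then $O_{p}(H)=H$ and $H/O_{p}(H)=1$ is cyclic, so $H$ is (trivially) cyclic modulo $p$. With these in hand, it suffices to identify $\mathrm{Out}(S)$ in each case and check that it is either cyclic or a $p$-group.

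For the alternating groups $A_{n}$ with $n\geq 5$, I would invoke the classical determination of the automorphism group: for $n\neq 6$ one has $\mathrm{Aut}(A_{n})\cong S_{n}$, so $\mathrm{Out}(A_{n})\cong S_{n}/A_{n}\cong C_{2}$, which is cyclic and therefore cyclic modulo any prime by observation (i). The single exceptional case is $A_{6}$, where $\mathrm{Out}(A_{6})\cong C_{2}\times C_{2}$; here the group is not cyclic, but it is a $2$-group, so observation (ii) gives that it is cyclic modulo $2$. This disposes of all alternating groups.

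For the $26$ sporadic simple groups I would argue by direct inspection of the known data (for instance the ATLAS): in every case $\mathrm{Out}(S)$ has order $1$ or $2$. A group of order $1$ or $2$ is cyclic, so observation (i) applies and $\mathrm{Out}(S)$ is cyclic modulo $p$ for every prime $p$. Thus every sporadic group satisfies the conclusion as well.

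The main obstacle is not conceptual but one of bookkeeping: one must confirm that no alternating or sporadic group has an outer automorphism group that is simultaneously noncyclic and not a $p$-group. The only noncyclic outer automorphism group arising among these families is $\mathrm{Out}(A_{6})\cong C_{2}\times C_{2}$, and the fact that it is a $2$-group is precisely what rescues the argument via observation (ii); every other case is already cyclic. Once this single exceptional case is correctly identified and handled, the statement follows uniformly from (i) and (ii).
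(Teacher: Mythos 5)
Your proposal is correct and follows essentially the same route as the paper: both arguments identify $\mathrm{Out}(S)$ case by case (order $1$ or $2$ for sporadic groups, $C_{2}$ for $A_{n}$ with $n\neq 6$, and the order-$4$ group for $A_{6}$) and then observe that a cyclic group or a $2$-group is automatically cyclic modulo $2$. Your explicit isolation of the two reduction facts (i) and (ii), and your identification of $\mathrm{Out}(A_{6})\cong C_{2}\times C_{2}$ as the unique noncyclic case, is slightly more detailed than the paper's proof but not a different argument.
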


\begin{proof}By\cite{GLS3}, we have $|\mathrm{Out}(S)|=2 $ or $1$ when $S$ is a sporadic group. Hence, $\mathrm{Out}(S)$
is cyclic modulo $p$ when $S$ is a sporadic group.
If $S:=A_n$ is an alternating group with degree $n$, then
$$|\mathrm{Out}(S)|=\left\{ \begin{array}{ll}
2, &
\mbox{if}~ n\geq 5 ~and~ n\neq 6;
\\[2ex] 4, &\mbox{if} ~n=6.\end{array}\right.$$

Hence $\mathrm{Out}(S)$ is a $2$-group, of course, $S$ is cyclic modulo $2$.
\end{proof}

First, we give the following notations.
Let $n$ be a positive integer, $q \gneq 1$ be a power of a prime number $p$, and $q$ be the order of some underlying finite field.
The notation $(a,b)$ represents the greatest common divisor of the integers $a$ and $b$. And $a|b$ represents $a$ is a factor of $b$.

\begin{theorem} Let $S$ be a nonabelian simple, if $S$ is type of Lie type except the following groups

(1) $A_{n}(q)$ for $q=p^{f}$ when $n+1\gneq 2$;

(2) $D_{n}(q)$ for $q=p^{f}$ when $n=4$ or $p=2$;

(3) $E_{6}(q)$ for $q=p^f$;

(4) $^2A_{n}(q)$ for $n\gneq 1$,

then $\mathrm{Out}(S)$ is cyclic modulo $r$ for some prime number $r$.
\end{theorem}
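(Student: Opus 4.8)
The plan is to reduce everything to the explicit description of the outer automorphism group of a finite simple group of Lie type. By \cite{GLS3} (Theorem 2.5.12), for every such $S$ there is a decomposition
$$\mathrm{Out}(S)=\mathrm{Outdiag}(S)\rtimes(\Phi_S\times\Gamma_S),$$
where $\mathrm{Outdiag}(S)$ is the (normal) group of diagonal automorphisms modulo inner ones, $\Phi_S\cong C_f$ (with $q=p^f$) is the cyclic group of field automorphisms, $\Gamma_S$ is the group of graph automorphisms arising from symmetries of the Dynkin diagram, and $\Phi_S$ and $\Gamma_S$ commute. I would couple this with one elementary observation, used repeatedly: a finite group $H$ is cyclic modulo $r$ as soon as it has a \emph{normal $r$-subgroup} $D$ with $H/D$ cyclic, since then $D\leq O_r(H)$ and $H/O_r(H)$ is a quotient of the cyclic group $H/D$. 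So the whole proof becomes the task of exhibiting, in each admitted family, a prime $r$ and a normal $r$-subgroup $D\unlhd\mathrm{Out}(S)$ with cyclic quotient.

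First I would dispose of the families in which the diagonal and graph pieces are "unmixed". For $A_1(q)$, $B_n(q)$, $C_n(q)$ and $E_7(q)$ the graph group $\Gamma_S$ is trivial, $\mathrm{Outdiag}(S)$ has order $(2,q-1)$ and is thus a $2$-group, and $\Phi_S$ is cyclic; taking $r=2$ and $D=\mathrm{Outdiag}(S)$ gives $\mathrm{Out}(S)/D\cong\Phi_S$, which is cyclic. For $E_8(q)$, $G_2(q)$, $F_4(q)$, $^2B_2(q)$, $^2G_2(q)$, $^2F_4(q)$ and $^3D_4(q)$ the group $\mathrm{Out}(S)$ is already cyclic (it equals $C_f$, $C_{2f}$ or $C_{3f}$, the exceptional graph automorphisms in characteristic $2$ or $3$ only enlarging the cyclic field group), and a cyclic group is cyclic modulo any prime. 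For $^2E_6(q)$ the graph group is trivial and $\mathrm{Outdiag}(S)$ has order $(3,q+1)$, hence is a $3$-group, so $r=3$ and $D=\mathrm{Outdiag}(S)$ work.

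The genuinely delicate families are $D_n(q)$ with $n\geq 5$ and $p$ odd, and $^2D_n(q)$, where a nontrivial diagonal part and a nontrivial graph part appear simultaneously. Here $\mathrm{Outdiag}(S)$ has order dividing $4$, hence is a $2$-group; $\Gamma_S$ has order $2$ (respectively is trivial); and $\Phi_S$ is cyclic. The key point is that $\Phi_S\times\Gamma_S$ is abelian, so its Sylow $2$-subgroup is characteristic; pulling this back along $\mathrm{Out}(S)\to\Phi_S\times\Gamma_S$ and enlarging by the $2$-group $\mathrm{Outdiag}(S)$ yields a normal $2$-subgroup $D$ of $\mathrm{Out}(S)$ with $\mathrm{Out}(S)/D$ equal to the odd part of $\Phi_S$, which is cyclic. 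Thus $r=2$ succeeds once more. I expect this to be the main obstacle: one must check that the order-$2$ graph automorphism, which by itself obstructs cyclicity, is harmless once it is absorbed together with the $2$-part of the field automorphisms into a single normal $2$-group.

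Finally, I would note that this analysis also explains why the excluded families must be omitted, which is a useful sanity check on the statement. For $A_n(q)$ and $^2A_n(q)$ the diagonal group has order $(n+1,q-1)$ or $(n+1,q+1)$, which may be divisible by several primes, and for $A_n$ it coexists with an order-$2$ graph automorphism; for $E_6(q)$ a diagonal part of order $(3,q-1)$ occurs alongside an order-$2$ graph automorphism; and for $D_4(q)$ the graph group is the non-abelian $S_3$. In each of these situations no single prime $r$ can simultaneously absorb the diagonal and graph contributions into one normal $r$-group with cyclic quotient, so both the method and the conclusion genuinely fail there, exactly matching the list of exceptions.
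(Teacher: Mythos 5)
Your proposal is correct and takes essentially the same route as the paper: a case-by-case analysis of the standard decomposition $\mathrm{Out}(S)=\mathrm{Outdiag}(S)\rtimes(\Phi_S\times\Gamma_S)$ (the paper reads the same data $d$, $f$, $g$ off the Atlas tables \cite[p.xvi, Table 5]{CCNPW}), in each admitted family absorbing the diagonal part --- and, for $D_n(q)$ with $q$ odd, also the graph involution together with the $2$-part of the field group --- into a normal $r$-subgroup with cyclic quotient. The only noteworthy difference is that where the paper settles the delicate case $D_n(q)$, $q$ odd, by citing \cite[p.75]{Wi}, you prove it outright by pulling back the Sylow $2$-subgroup of $\Phi_S\times\Gamma_S$, using your explicit (and correct) lemma that a normal $r$-subgroup with cyclic quotient forces $H/O_r(H)$ to be cyclic --- which is exactly the mechanism the paper's Case 2 uses implicitly --- so your write-up is, if anything, more self-contained than the original.
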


\begin{proof}
By \cite[p.xv]{CCNPW}, the outer automorphism group of the simple Chevalley group is a semidirect product (in this order)
 of groups of orders $d$ (diagonal automorphisms),  $f$ (field automorphisms, generated by a Frobenius automorphism by\cite[p.246, Theorem 5.4]{L}), and $g$ (graph automorphisms modulo field automorphisms, coming from automorphisms of the Dynkin diagram), except
 that for
 $$B_2(2^f), G_2(3^f), F_4(2^f)$$
 the (extraordinary) graph automorphism squares to the generating field automorphism.
 By \cite[p.xvi, Table 5]{CCNPW}, we have
\[
\begin{array}{|c|c|c|c|c|c|}\hline
\mathrm{Type} & \mathrm{Condition} &\mathrm{Name}&d &f &g \\\hline
1 & & A_{1}(q)& (2, q-1)& q=p^f & 1\\\hline
2 & n\geq 2 & A_{n}(q)& (n+1, q-1)& q=p^f & 2\\\hline
3 & n\geq 2 & ^2A_n(q)& (n+1, q+1)& q^2=p^f & 1\\\hline
4 &   & B_2(q)& (2, q-1)& q=p^f & 2~\mathrm{if}~p=2\\\hline
5 & f~ \mathrm{odd} & ^2B_2(q)& 1 & q=2^f & 1\\\hline
6 &  n\geq 3 & B_n(q)& (2, q-1)& q=p^f & 1\\\hline
7 &  n\geq 3 & C_n(q)& (2, q-1)& q=p^f & 1\\\hline
\end{array}
\]
$$\mathrm{Table}~1$$
\[
\begin{array}{|c|c|c|c|c|c|}\hline
\mathrm{Type} & \mathrm{Condition} &\mathrm{Name}&d &f &g \\\hline
8 & & D_{4}(q)& (2, q-1)^2 & q=p^f & 3!\\\hline
9 &  & ^3D_{4}(q)& 1 & q^3=p^f & 1\\\hline
10 & n\gneq 4~\mathrm{even} & D_n(q)& (2, q-1)^2& q=p^f & 2\\\hline
11 &  n\gneq 4~\mathrm{odd}  & D_n(q)& (4, q^n-1)& q=p^f & 2\\\hline
12 & n\geq 4 &  ^2D_n(q)&(4, q^n+1) & q^2=p^f & 1\\\hline
\end{array}
\]
$$\mathrm{Table}~2$$
\[
\begin{array}{|c|c|c|c|c|c|}\hline
\mathrm{Type} & \mathrm{Condition} &\mathrm{Name} & d &f &g \\\hline
13 & & G_{2}(q)& 1& q=p^f & 2~\mathrm{if}~ p=3\\\hline
14 &f~\mathrm{odd} & ^2G_{2}(q)& 1& q=3^f & 1\\\hline
15 &   & F_4(q)& 1 & q=p^f & 2~\mathrm{if}~ p=2\\\hline
16 &f~\mathrm{odd}  & ^2F_4(q)& 1 & q=2^f & 1\\\hline
17 &  & E_6(q)& (3, q-1) & q=p^f & 2\\\hline
18 &   & ^2E_6(q)& (3, q+1)& q^2=p^f & 1\\\hline
19 &   & E_7(q)& (2, q-1)& q=p^f & 1\\\hline
20 &  & E_8(q)& 1 & q=p^f & 1\\\hline
\end{array}
\]
$$\mathrm{Table}~3$$
In the Table 1-3, the groups of order $d,f,g$ are cyclic except that
$3!$ indicates the symmetric group of degree $3$. And an entry '$2$ if $\ldots$' means $1$ if not.

 \textbf{ Case 1.} In the Table 1-3, we can find the outer automorphism group of $S$ is cyclic when
 $S$ is of the type 5, 9, 14, 16, 20. Hence, in these types, $\mathrm{Out}(S)$ is cyclic mod $r$ for some prime $r$.

  \textbf{ Case 2.} In the Table 1-3, we can find the outer automorphism group of $S$ is a semidirect product of groups of orders $d$ and $f$ when
 $S$ is of the type 1, 3, 6, 7, 12, 18, 19. Since $d$ is prime power for some prime number except the type 3, we can see $\mathrm{Out}(S)$ is cyclic mod $r$ for some prime $r$ in these cases except the
 type 3.

\textbf{ Case 3.} In the Table 1-3, for the type 4  at $p=2$, the type 13 at $p=3$ and the type 15 at $p=2$, we have the outer automorphism group of $S$ is
cyclic.  Hence, in these types, $\mathrm{Out}(S)$ is cyclic mod $r$ for some prime $r$.

For the type 4 at $p\neq 2$, since an entry '$2$ if $\ldots$' means $1$ if not, thus the outer automorphism group of $S$ is a semidirect product of groups of orders $d$ and $f$ and $d|2$. Hence, we have $\mathrm{Out}(S)$ is cyclic mod $2$ in this case.

For the type 13 at $p\neq 3$, since an entry '$2$ if $\ldots$' means $1$ if not, thus the only outer automorphisms of $G_2(q)$
  are field automorphisms. Hence, in this type, $\mathrm{Out}(S)$ is cyclic mod $r$ for some prime $r$.

For the type 15 at $p\neq 2$, since an entry '$2$ if $\ldots$' means $1$ if not, thus the only outer automorphisms of $F_4(q)$
 are field automorphisms. Hence, in this type, $\mathrm{Out}(S)$ is cyclic mod $r$ for some prime $r$.

\textbf{ Case 4.} For the type 10 and 11, if $q$ is odd,
by \cite[p.75]{Wi}, we have $\mathrm{Out}(S)$ is cyclic mod $2$ in this case.
If $q=2^f$, we have  $$\mathrm{Out}(S)\cong C_{f}\rtimes C_2.$$
That means $\mathrm{Out}(S)$ is not always cyclic mod $r$ for some prime $r$ when $q=2^f$.

\textbf{ Case 5.} In the Table 1-3, the outer automorphism groups of the type 8 and 17 are not always cyclic mod $r$ for some prime $r$.

For the type 2, by \cite[p.50, Theorem 3.2]{Wi}, if $n+1=2$, then
 $\mathrm{Out}(S)$ is cyclic mod $2$. But if $n+1\gneq 2$, we have
 $\mathrm{Out}(S)\cong D_{2(n+1, q-1)}\times C_f$ where $q=p^f$.
 So, $\mathrm{Out}(S)$ is cyclic mod $r$ for some prime $r$ except $n+1\gneq 2$.

Hence, the outer automorphism groups of above simple groups are  cyclic mod $r$ for some prime $r$ except the following:
$$ A_{n}(q)~ \mathrm{for}~ n+1\gneq 2, ~^2A_{n}(q), D_n(q)~\mathrm{for}~n=4~ \mathrm{or} ~p=2, E_6(q).$$
\end{proof}

\textbf{ACKNOWLEDGMENTS}\hfil\break
The authors would like to thank Prof. S. Bouc for his numerous discussion in Beijing in Oct. 2014.
And the first author would like to thank  Prof. C. Broto for his constant encouragement in Barcelona in Spain.

\end{document}